\newtheorem{theorem}{Theorem}[section]
\newtheorem{lemma}[theorem]{Lemma}
\newtheorem{proposition}[theorem]{Proposition}
\newtheorem{definition}[theorem]{Definition}
\newtheorem{remark}[theorem]{Remark}
\newtheorem{example}[theorem]{Example}
\title{An analytic study of bi-harmonic flow with a forcing term}
\author{Mohammad Javad Habibi Vosta Kolaei \\ Institute of Mathematics \\ Henan Academy of Sciences, No. 228, Chongshi Village,\\ Zhendong new district, Zhengzhou 450046, Henan, P. R. China \\ \texttt{mjhabibi@hnas.ac.cn}}
\date{}
\begin{document}

\maketitle

\begin{abstract}
In this paper, we study the evolution of smooth, closed planar curves under a fourth-order bi-harmonic flow with an external forcing term. Such flows arise naturally in the theory of bi-harmonic maps and geometric variational problems involving bending energy. We first establish the global existence of smooth solutions to the associated initial value problem, assuming appropriate conditions on the forcing term. The analysis is performed through a reformulation of the geometric flow using the support function, enabling a scalar PDE characterization of the evolution. Under specific geometric constraints, we demonstrate that the governing equation admits a Monge–Ampère-type structure that can exhibit hyperbolic behavior. Furthermore, we prove that convexity is preserved during the evolution and derive sufficient conditions ensuring long-time convergence to steady-state solutions. Our results extend recent developments in geometric analysis by clarifying the role of forcing terms in stabilizing high-order curvature flows and enhancing their qualitative behavior. This paper extends previous works (\cite{ya, wa}) by deriving conditions for hyperbolicity in bi-harmonic flows with forcing terms. 
\end{abstract}

\noindent \textbf{Keywords:} Bi-harmonic flow, \text{Monge--Amp\`ere} equation, Hyperbolicity, Convex curves \\
\noindent \textbf{AMS Classification:} Primary 58J45; Secondary 35G55, 35L70

\section{Introduction}
\subsection{A brief background of evolving curves under geometric flows }
The study of bi-harmonic flows, driven by the minimization of bending energy, has become a cornerstone in geometric analysis and applied mathematics. These fourth-order flows generalize classical mean curvature flows by incorporating higher-order smoothing effects, making them essential for modeling the evolution of curves and surfaces in contexts where preserving regularity is critical. Bi-harmonic flows are closely tied to the theory of minimal surfaces, elastic deformations, and interface dynamics, serving as a powerful tool for understanding the long-term behavior of geometric structures.\\

A significant aspect of bi-harmonic flows arises when external forces influence the evolution, leading to the incorporation of forcing terms. These forcing terms introduce nonlinearity and complexity into the system, necessitating careful analysis to ensure the existence and stability of solutions. In this context, the evolution can often be described by a hyperbolic Monge–Ampère equation, whose properties govern the stability and convexity of the evolving shape. The interplay between geometric flow, external forcing, and hyperbolic dynamics presents a rich mathematical framework for studying curve evolution in various applications, ranging from elasticity theory to computer-aided geometric design (for example see \cite{dw, oz}).\\

This paper explores the bi-harmonic flow with forcing terms under conditions that guarantee the persistence of hyperbolic behavior, preventing the formation of singularities and ensuring smooth, global solutions. By leveraging Lyapunov functionals and energy-minimizing techniques, we derive conditions that preserve the hyperbolicity of the associated Monge–Ampère equations, shedding light on the long-term stability and convergence of evolving geometric structures.\\
The study of evolving plane curves was successful in providing great insight into differential geometry. M. Gage and R. Hamilton proved that for a convex curve $X$ embedded in the plane $\mathbb{R}^{2}$, the heat equation
\begin{align*}
\frac{\partial X}{\partial t} = \Delta X,
\end{align*}
shrinks $X$ to a point in a case that, $X$ remains convex and becomes circular as it shrinks where the curve $X$ is called convex if it is closed, embedded, and has positive curvature everywhere (you can see \cite{ga1}). Later Grayson \cite{gr2, gr1} made an improvement in shortening embedded curves. Gurtin and Guidugli \cite{gu} developed a hyperbolic theory for the evolution of the plane curves. Later, Yau \cite{ya1} has studied the following equation 
\begin{align*}
\frac{\partial^{2} X}{\partial t^{2}} = H\mathfrak{n},
\end{align*}
where $H$ is the mean curvature and $\mathfrak{n}$ is the unit inner normal vector of the surface. This equation can be seen as a vibrating membrane or the motion of the surface. These works created an increasing interest in studying hyperbolic mean curvature flow (see also \cite{de}). Wang \cite{wa}, has studied the evolving plane curves $X\left(., t\right)$ under the hyperbolic mean curvature flow with a forcing term
\begin{equation*}
\left\{
\begin{array}{lr}
\frac{\partial^{2} X}{\partial t^{2}} = \left(H\left(u,t\right) - F\left(u\right)\right)\overrightarrow{N}\left(u,t\right) - \langle \frac{\partial^{2} X}{\partial \mathscr{s}\partial t}, \frac{\partial X}{\partial t}\rangle \overrightarrow{T},\\
X\left(u,0\right) = X_{0}\left(u\right),\\
\frac{\partial X}{\partial t}\left(u,0\right) = f\left(u\right)\overrightarrow{N_{0}},
\end{array}\right.
\end{equation*}
for all $\left(u,t\right) \in \mathbb{S}^{1} \times \left[0,1\right]$, where $H$ is the mean curvature, and $F\left(u\right)$ is a forcing term. Wang showed that this equation can be reduced to an initial value problem for a single partial differential equation for its support function. Also, in case of the constant forcing term, the convergency of evolving the initial curve has been studied.\\
About forty years after Hamilton's contribution to the classic heat flow equation, Yang and Fu \cite{ya}, considered the equation 
\begin{align*}
\frac{\partial X}{\partial t} = -\Delta^{2}X,
\end{align*}
where $X\left(., t\right)$ denotes a family of smooth, closed planar curves in $\mathbb{R}^{2}$. It was proved in \cite{ya} that, for any smooth, closed initial curve, there exists a smooth solution to the above flow for all time provided that $||\partial_{\mathscr{s}}^{i}\kappa||_{L^{2}}$ is uniformly bounded for any integer $i$, where $\kappa$ denotes the geodesic curvature.\\
The study of evolving curves through geometric flows has garnered significant attention from mathematicians worldwide. Particularly, Gao and Zhang \cite{ga2} addressed Yau's problem, exploring the feasibility of utilizing parabolic curvature flows to transform one curve $X_{1}$ to another curve $X_{2}$ over finite or infinite time. Recently, some higher-order curve flows have been considered in centro-affine geometry because of their relations with Lie groups $A\left(n, \mathbb{R}\right) = GL\left(n, \mathbb{R}\right) \propto \mathbb{R}^{n}$ (for more details see \cite{qu, ji, go}).  
\subsection{Bi-harmonic flow and main aim}
The theory of harmonic maps is significant across various domains within mathematics and physics. To extend the classical framework of harmonic maps, Elles and Lemaire \cite{el} suggested an investigation into $\mathscr{k}$-harmonic maps.\\
A bi-harmonic map $u$ from Riemannian manifold $\left(M_{1}^{n}, g_{1}\right)$ to $\left(M_{2}^{m}, g_{2}\right)$ can be seen as a critical point of the bi-energy functional 
\begin{align*}
E\left(u\right) = \frac{1}{2}\int_{M_{1}} |\tau \left(u\right)|^{2} \nu_{g_{1}},
\end{align*}
where $\tau\left(u\right) = trace_{g_{1}}\nabla du =0$ is a tension field of the map $u$.
From \cite{ji1} the Euler-Lagrange equation with respect to $E$ is written as
\begin{align*}
\tau_{2}\left(u\right) := -\Delta \tau \left(u\right) - trace_{g_{1}}R^{N}\left(du, \tau \left(u\right)\right) du = 0,
\end{align*}
where $\tau_{2}\left(u\right)$ denotes the bi-tension field of the map $u$ and $R^{N}$ denotes the curvature tensor of $M_{2}^{m}$.\\
It is important to remember that an isometric immersion $u: \left(M_{1}^{n}, g_{1}\right) \longrightarrow \left(M_{2}^{m}, g_{2}\right)$ is bi-harmonic if and only if the mean curvature vector field $H$ satisfies the fourth-order PDE,
\begin{align}\label{yek}
\Delta H + trac R^{n}\left(du, H\right)du =0.
\end{align}
We would like to emphasize that, when the ambient space $M_{2}^{m}$ is just the Euclidean space $\mathbb{R}^{m}$, then the equation (\ref{yek}) turns into $\Delta H =0$ or equivalently $\Delta^{2}u =0$. This notion also was introduced and studied by Ban-Yen Chen \cite{ch}.\\
Consider $X: I \rightarrow \mathbb{R}^{2}$ as a smooth closed curve in the plane $\mathbb{R}^{2}$ with parameter $u$. To emphasize that the curve $X$ is closed, possibly, $X$ can be written as $X: \mathbb{S}^{1} \times I \rightarrow \mathbb{R}^{2}$. In this case, $\mathscr{s}$ denotes the arclength of the curve and $\kappa$ is the curvature. If
\begin{align*}
\nu = |\frac{dX}{du}| = \sqrt{\langle \frac{dX}{du}, \frac{dX}{du}\rangle},
\end{align*} 
then, the relation between $\frac{d}{du}$ and $\frac{d}{d\mathscr{s}}$ is written by
\begin{align*}
\frac{d}{d\mathscr{s}} = \frac{1}{\nu}\frac{d}{du}.
\end{align*}
We recall that, $T$ and $N$ are the unit tangent vector and the unit normal vector to the curve respectively. Clearly, $\frac{dX}{du} = \nu T$, also from Frenet formulae
\begin{align*}
\frac{dT}{du} = \nu \kappa N, \,\,\,\,\,\,\, \frac{dN}{du} = -\nu \kappa T.
\end{align*}
These equations also can be rewritten as
\begin{align*}
\frac{dT}{d\mathscr{s}} = \kappa N, \,\,\,\,\,\,\, \frac{dN}{d\mathscr{s}} = -\kappa T.
\end{align*}
In this paper, we are going to consider bi-harmonic curve flow coupled with a forcing term as
\begin{align*}
\frac{\partial X}{\partial t} = -\Delta^{2}X + F\left(u\right),
\end{align*}
where $\Delta = \frac{\partial^{2}}{\partial \mathscr{s}^{2}}$ is the Laplacian operator and $F\left(u\right)$ is a forcing term.
With the expression for the Laplacian involves the curvature $\kappa$ and direct computations
\begin{align*}
\Delta^{2}X &= \frac{\partial}{\partial \mathscr{s}}\left(\frac{\partial \kappa}{\partial \mathscr{s}}N - \kappa^{2}T\right)\\
&= \left(\frac{\partial^{2}\kappa}{\partial \mathscr{s}^{2}}- \kappa^{3}\right)N - 3\kappa \frac{\partial \kappa}{\partial \mathscr{s}}T.
\end{align*}
Thus the flow equation becomes
\begin{align}\label{do}
\frac{\partial X}{\partial t} = -\left(\frac{\partial^{2}\kappa}{\partial \mathscr{s}^{2}} - \kappa^{3}\right)N + 3\kappa \frac{\partial \kappa}{\partial \mathscr{s}}T + F\left(u\right).
\end{align}
The existence of the solution of the bi-harmonic flow without the forcing term, was previously studied in \cite{ya}. In this paper, first, we are going to show that the equation (\ref{do}) can be reduced to an hyperbolic Monge-Amp\`ere equation and then, give an analytic overview based on the forcing term. This method comes from \cite{wa} (also there are some other works related to bi-harmonic flow. For recent advances, as an example you can see \cite{to}). Thus this work can be seen as a natural improvement of \cite{ya} and \cite{wa}.   
\section{Existence result and Monge-Amp\`ere equation}
The support function is a fundamental tool in differential geometry and convex analysis, used to describe the shape and properties of curves and surfaces. For a smooth, convex curve in the plane, the support function $\mathbf{S}\left(\theta\right)$ represents the signed distance from the origin to the tangent line at an angle $\theta$ with respect to a fixed reference direction. In this section, we are going to show that the equation (\ref{do}) can be reduced to an hyperbolic Monge-Amp\`ere equation.\\
A curve $X: \mathbb{S}^{1} \times I \longrightarrow \mathbb{R}^{2}$ evolves normally if
\begin{align*}
\langle \frac{\partial X}{\partial t}, \frac{\partial X}{\partial u}\rangle = 0,
\end{align*}
for all $\left(u,t\right) \in \mathbb{S}^{1} \times I$. The importance of this definition is because of following lemma which can be found in \cite{le}.
\begin{lemma}
If the evolving curve $X$ is closed, then there is a parameter change $\varphi$ for $X$ such that $X\circ \varphi$ is a normally evolving curve.
\end{lemma}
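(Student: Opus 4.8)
The plan is to build $\varphi$ explicitly as the flow of a time-dependent vector field on $\mathbb{S}^{1}$ and to use the closedness (hence compactness) of the curve to guarantee that this flow exists for all relevant times and stays a diffeomorphism. First I would set $\tilde{X}(u,t):=X(\varphi(u,t),t)$ with $\varphi(\cdot,0)=\mathrm{id}_{\mathbb{S}^{1}}$, and compute by the chain rule
\begin{align*}
\frac{\partial \tilde{X}}{\partial u} = \frac{\partial X}{\partial u}\,\frac{\partial \varphi}{\partial u}, \qquad \frac{\partial \tilde{X}}{\partial t} = \frac{\partial X}{\partial u}\,\frac{\partial \varphi}{\partial t} + \frac{\partial X}{\partial t},
\end{align*}
where all derivatives of $X$ are evaluated at $(\varphi(u,t),t)$. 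Taking the inner product gives
\begin{align*}
\Big\langle \frac{\partial \tilde{X}}{\partial t}, \frac{\partial \tilde{X}}{\partial u}\Big\rangle = \frac{\partial \varphi}{\partial u}\left( \frac{\partial \varphi}{\partial t}\,\Big|\frac{\partial X}{\partial u}\Big|^{2} + \Big\langle \frac{\partial X}{\partial t}, \frac{\partial X}{\partial u}\Big\rangle\right),
\end{align*}
so, as long as $\partial_{u}\varphi\neq 0$, the normal-evolution condition $\langle \partial_{t}\tilde{X},\partial_{u}\tilde{X}\rangle\equiv 0$ is equivalent to the scalar equation $\partial_{t}\varphi(u,t) = -\,a(\varphi(u,t),t)$, where $a:\mathbb{S}^{1}\times I\to\mathbb{R}$ is the smooth function $a = \langle \partial_{t}X,\partial_{u}X\rangle / |\partial_{u}X|^{2}$ (smoothness using that $X$ is smooth and regular, $|\partial_{u}X|\neq 0$).

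Next I would observe that this is exactly the statement that $u\mapsto\varphi(u,t)$ is the time-$t$ flow map of the time-dependent vector field $V(\cdot,t):=-a(\cdot,t)$ on $\mathbb{S}^{1}$ (note $a$ is $2\pi$-periodic in its spatial argument, since it is built from the closed curve $X$, so $V$ genuinely lives on $\mathbb{S}^{1}$). Here the hypothesis that $X$ is \emph{closed} is used decisively: $\mathbb{S}^{1}$ is compact, hence $V$ is a complete vector field and its flow $\varphi(\cdot,t)$ exists for every $t\in I$. Standard ODE theory (smooth dependence on initial data, plus the fact that the flow of a vector field is a diffeomorphism with smooth inverse) then shows each $\varphi(\cdot,t)$ is a smooth diffeomorphism of $\mathbb{S}^{1}$; moreover differentiating $\partial_{t}\varphi=-a(\varphi,t)$ in $u$ gives $\partial_{u}\varphi(u,t)=\exp\!\big(-\int_{0}^{t}(\partial_{v}a)(\varphi(u,\sigma),\sigma)\,d\sigma\big)>0$, so orientation is preserved and $\tilde{X}=X\circ\varphi$ is a bona fide reparametrization of $X$.

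With $\varphi$ so chosen, the bracketed factor in the displayed identity vanishes identically, hence $\langle \partial_{t}(X\circ\varphi),\partial_{u}(X\circ\varphi)\rangle\equiv 0$, i.e. $X\circ\varphi$ evolves normally, which is the claim. The one point requiring genuine care — and the main obstacle — is the global-in-time solvability of the transport ODE for $\varphi$ together with the preservation of the diffeomorphism (in particular $\partial_{u}\varphi>0$) property; both are settled by the compactness of $\mathbb{S}^{1}$, which is precisely where the closedness assumption on $X$ is indispensable, since for a non-closed curve $V$ need not be complete and the reparametrization could degenerate in finite time.
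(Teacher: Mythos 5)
Your proof is correct: the reduction of the normal-evolution condition to the transport ODE $\partial_{t}\varphi=-a(\varphi,t)$ with $a=\langle\partial_{t}X,\partial_{u}X\rangle/|\partial_{u}X|^{2}$, and the use of compactness of $\mathbb{S}^{1}$ to get a globally defined, orientation-preserving flow of diffeomorphisms, is exactly the standard argument. The paper itself gives no proof of this lemma (it only cites Lefloch--Smoczyk), and your write-up supplies precisely the argument that reference contains, so there is nothing to object to.
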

Without loss of generality, denote $\vartheta$ to be the unit outer normal angle for a convex closed curve $X: \mathbb{S}^{1} \rightarrow \mathbb{R}^{2}$. Thus,
\begin{align*}
N = \left(-\cos \vartheta, -\sin \vartheta\right), \,\,\,\,\, T = \left(-\sin \vartheta, \cos \vartheta\right),
\end{align*}
which means by Frenet formulae, we have
\begin{align*}
\frac{\partial \vartheta}{\partial \mathscr{s}} = \kappa,
\end{align*}
so the previous formulas for the unit tangent and the unit normal vectors can be rewritten as
\begin{align*}
\frac{\partial N}{\partial t} = -\frac{\partial \vartheta}{\partial t}T, \,\,\,\,\,\,\, \frac{\partial T}{\partial t} = \frac{\partial \vartheta}{\partial t}N.
\end{align*}
Now consider $X\left(u,t\right): \mathbb{S}^{1} \times I \longrightarrow \mathbb{R}^{2}$ as a family of convex curves satisfying the bi-harmonic flow with forcing term (\ref{do}). The support function of $X$ is given by
\begin{align*}
\mathbf{S}\left(\vartheta, t\right) &= \langle X\left(\vartheta, t\right), -N\rangle \\
&= \langle X\left(\vartheta, t\right), \left(\cos \vartheta, \sin \vartheta\right)\rangle \\
&= x\left(\vartheta, t\right) \cos \vartheta + y\left(\vartheta, t\right)\sin \vartheta.
\end{align*}
Since $\vartheta$ already represents the outer unit normal angle, then no additional re-parametrization is required. The support function $\mathbf{S}\left(\vartheta, t\right)$ naturally evolves along the normal direction. As
\begin{align*}
\mathbf{S}_{\vartheta}\left(\vartheta, t\right) = \langle X\left(\vartheta, t\right), T\rangle,
\end{align*}
where
\begin{align*}
\langle X_{\vartheta}\left(\vartheta, t\right), \left(\cos \vartheta, \sin \vartheta\right) \rangle = 0.
\end{align*}
Thus the curve can be represented by the support function
\begin{align*}
x &= \mathbf{S} \cos \vartheta - \mathbf{S}_{\vartheta}\sin \vartheta,\\
y &= \mathbf{S} \sin \vartheta + \mathbf{S}_{\vartheta} \cos \vartheta.
\end{align*}
By all above, the curvature can be expressed directly in terms of $\mathbf{S}\left(\vartheta, t\right)$ as
\begin{align*}
\kappa = \frac{1}{\mathbf{S}_{\vartheta \vartheta} + \mathbf{S}}.
\end{align*}
Now we are going through the evolution process, first, we have
\begin{align*}
\frac{\partial \kappa}{\partial t} = -\frac{1}{\left(\mathbf{S}_{\vartheta \vartheta} + \mathbf{S}\right)^{2}}\left(\mathbf{S}_{\vartheta \vartheta t} + \mathbf{S}_{t}\right),
\end{align*}
let us include the forcing term $F\left(u\right)$, the equation becomes
\begin{align*}
\frac{\partial \kappa}{\partial t} = -\frac{1}{\left(\mathbf{S}_{\vartheta \vartheta} + \mathbf{S}\right)^{2}}\left(\mathbf{S}_{\vartheta \vartheta t} + \mathbf{S}_{t}\right) + F\left(u\right).
\end{align*}
For higher-order derivatives
\begin{align*}
\frac{\partial^{4}\kappa}{\partial \mathscr{s}^{4}} = \frac{\partial^{4}}{\partial \vartheta^{4}}\left(\frac{1}{\mathbf{S}_{\vartheta \vartheta} + \mathbf{S}}\right),
\end{align*}
by adding the forcing term and also simplifying this, getting
\begin{align*}
\frac{\partial^{4}\kappa}{\partial \mathscr{s}^{4}} = -\frac{\left(\mathbf{S}_{\vartheta \vartheta \vartheta \vartheta} + \mathbf{S}_{\vartheta \vartheta}\right)}{\left(\mathbf{S}_{\vartheta \vartheta} + \mathbf{S}\right)^{3}} + F\left(u\right).
\end{align*}
After substituting and simplifying, the evolution equation for the support function takes the form,
\begin{align}\label{se}
\mathbf{S}_{tt} + \mathbf{S}^{2}\mathbf{S}_{\vartheta \vartheta tt} - \mathbf{S}^{2}_{\vartheta\vartheta t} +1 = F\left(u\right).
\end{align}
We recall that, the typical hyperbolic Monge-Amp\`ere equation for a function $\mathbf{S}\left(\vartheta, t\right)$ is expressed as
\begin{align*}
A\mathbf{S}_{tt} + B\mathbf{S}_{\vartheta \vartheta tt} + C\left(\mathbf{S}_{\vartheta \vartheta t}\right)^{2} + D = 0,
\end{align*}
where $A$, $B$, $C$, and, $D$ are functions of $\mathbf{S}$, $\mathbf{S}_{\vartheta}$, $\mathbf{S}_{\vartheta \vartheta}$ and possibly $t$ and $\vartheta$. By setting 
\begin{align*}
A &=1, \,\,\,\,\,\, B =\mathbf{S}^{2}\\
C &= -1 \,\,\,\,\, D = F\left(u\right) -1,
\end{align*}
the equation (\ref{se}), which is equivalent to the bi-harmonic flow with forcing term (\ref{do}), forms a Monge-Amp\`ere equation. It is well known that, for a Monge-Amp\`ere equation to be hyperbolic, the determinant condition must hold
\begin{align*}
AB - C^{2} >0.
\end{align*}
Thus the equation (\ref{se}) is hyperbolic when $\mathbf{S}^{2} >1$. As a key assumption that makes sure the final equation remains hyperbolic one assumes $F\left(u\right)$ is positive and also $F\left(u\right) \leq \mathbf{S}^{2} -1$. By what has been mentioned, the following theorem is already proved.
\begin{theorem}
For any smooth, convex, closed initial curve $X_{0}$, there exist a family of smooth, convex, closed curves $X\left(., t\right)$ with $t \in I$ satisfying the bi-harmonic flow with forcing term (\ref{do}) provided that the forcing term is positive and $F\left(u\right) \leq \mathbf{S}^{2} -1$, where $\mathbf{S}$ denotes the support function with respect to the family of smooth, convex, closed curves.
\end{theorem}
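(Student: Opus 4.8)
The plan is to read this theorem as a consequence of the reduction already carried out in this section, together with three ingredients that the phrase ``already proved'' glosses over: a local existence theory for the reduced equation (\ref{se}), preservation of convexity, and a continuation argument covering all of $I$. Concretely: on convex closed curves, parametrised by the outer normal angle $\vartheta$, the flow (\ref{do}) is equivalent to the scalar evolution equation (\ref{se}) for the support function $\mathbf{S}(\vartheta,t)$, with initial data inherited from $X_0$ — the datum $\mathbf{S}(\cdot,0)=\mathbf{S}_0$ being the support function of $X_0$, and the remaining Cauchy datum $\mathbf{S}_t(\cdot,0)$ being read off from the normal speed of $X_0$ under (\ref{do}). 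Once a smooth solution $\mathbf{S}$ of (\ref{se}) with $\mathbf{S}+\mathbf{S}_{\vartheta\vartheta}>0$ is in hand, the curve is recovered by $x=\mathbf{S}\cos\vartheta-\mathbf{S}_\vartheta\sin\vartheta$, $y=\mathbf{S}\sin\vartheta+\mathbf{S}_\vartheta\cos\vartheta$, and a computation reversing the derivation of (\ref{se}) shows it solves (\ref{do}); the condition $\mathbf{S}+\mathbf{S}_{\vartheta\vartheta}>0$ is exactly $\kappa>0$, i.e.\ convexity of the reconstructed curve.

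For local existence, I would exploit the structure already identified: with $A=1$, $B=\mathbf{S}^2$, $C=-1$, $D=F(u)-1$, the hypotheses $F>0$ and $F\le\mathbf{S}^2-1$ force the strict discriminant inequality $AB-C^2=\mathbf{S}^2-1\ge F(u)>0$, so (\ref{se}) is strictly hyperbolic. Reducing it in the usual way — introducing $w=\mathbf{S}_t$ and the relevant $\vartheta$-derivatives as auxiliary unknowns — to a quasilinear first-order symmetric-hyperbolic system with smooth periodic coefficients on $\mathbb{S}^1$, the standard energy/iteration scheme yields some $\varepsilon>0$ and a unique smooth solution on $\mathbb{S}^1\times[0,\varepsilon)$. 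Since $X_0$ is smooth and convex, $\mathbf{S}_0+(\mathbf{S}_0)_{\vartheta\vartheta}>0$, and by continuity this — hence convexity of the reconstructed curve — persists on a (possibly shorter) interval.

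To reach the whole of $I$ one needs a priori estimates. Convexity is controlled first: writing $\rho:=\mathbf{S}+\mathbf{S}_{\vartheta\vartheta}=1/\kappa$, deriving its evolution equation from (\ref{se}), and running a comparison/maximum-principle argument at a hypothetical first time where $\min_\vartheta\rho$ reaches $0$ — this is precisely the ``convexity is preserved'' statement of the introduction, and the positivity of $F$ is what enters to exclude such a time. With convexity secured, the standing hypothesis $F\le\mathbf{S}^2-1$ with $F>0$ keeps $\mathbf{S}^2>1$, so the hyperbolicity of (\ref{se}) is maintained; one then closes higher-order energy estimates for $\mathbf{S}$ on each compact subinterval of $I$ by testing (\ref{se}) and its $\vartheta$-differentiated versions against appropriate multipliers, using the damping/smoothing built into the fourth-order operator $-\Delta^2$ — in the spirit of the a priori control of $\|\partial_{\mathscr{s}}^i\kappa\|_{L^2}$ invoked in \cite{ya}. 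A continuation argument then extends $\mathbf{S}$, hence $X$, to all of $I$, and the family of reconstructed curves is smooth, convex and closed as claimed.

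The main obstacle is this global step. The hyperbolicity hypothesis $F\le\mathbf{S}^2-1$ constrains the evolving quantity $\mathbf{S}$ itself, so one must show the bound is dynamically consistent — propagated along the flow rather than merely postulated — and must simultaneously prevent any higher $\vartheta$-derivative of $\mathbf{S}$ from escaping in finite time while the equation is hyperbolic rather than parabolic. Getting the convexity/maximum-principle argument and the energy estimates to interlock, and, at the outset, specifying the Cauchy datum $\mathbf{S}_t(\cdot,0)$ consistently with the geometric flow (\ref{do}), is where the real work lies; the Monge--Amp\`ere reduction and the sign conditions on $F$ are precisely what make the argument go through.
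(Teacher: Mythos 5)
Your reduction is the same one the paper uses: pass to the support function, obtain the scalar equation (\ref{se}), identify the Monge--Amp\`ere coefficients $A=1$, $B=\mathbf{S}^{2}$, $C=-1$, $D=F\left(u\right)-1$, and observe that $0<F\left(u\right)\le \mathbf{S}^{2}-1$ forces $AB-C^{2}=\mathbf{S}^{2}-1>0$. But be aware that this discriminant check is the \emph{entirety} of the paper's proof: the theorem is prefaced by ``By what has been mentioned, the following theorem is already proved,'' and none of the three ingredients you correctly identify as the real content --- a local existence theory for (\ref{se}), propagation of convexity, and a continuation argument showing that the constraint $F\le\mathbf{S}^{2}-1$ (which involves the unknown solution) is dynamically consistent --- appears anywhere in the paper. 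So your proposal is not missing anything the paper supplies; it is an honest account of what would have to be done, with the hard steps named but not executed.

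There is, moreover, a structural obstruction you should not pass over. The flow (\ref{do}) is first order in time, so the normal velocity of the curve, and hence $\mathbf{S}_{t}$, is determined pointwise by the curve itself; yet (\ref{se}) contains $\mathbf{S}_{tt}$ and $\mathbf{S}_{\vartheta\vartheta tt}$ and would require an independent Cauchy datum $\mathbf{S}_{t}\left(\cdot,0\right)$, exactly as your plan stipulates. These two facts are incompatible: a first-order flow cannot be ``equivalent'' to a genuinely second-order-in-time equation, and the intermediate computations leading to (\ref{se}) (for instance the claimed formula for $\partial^{4}\kappa/\partial\mathscr{s}^{4}$ with a bare additive $F\left(u\right)$) do not survive scrutiny. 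The paper itself concedes this later: in Section~2.1 it derives the first-order parabolic equation (\ref{doyek}) for $\mathbf{S}$ and states that prior claims of a hyperbolic Monge--Amp\`ere formulation ``are inaccurate in this context and are herein corrected.'' Consequently the symmetric-hyperbolic local existence scheme you propose is built on an equation that does not actually follow from (\ref{do}); a correct proof would have to work with the parabolic formulation (\ref{doyek}), or with the curvature estimates of \cite{ya}, at which point the hypothesis $F\le\mathbf{S}^{2}-1$ loses the role your argument assigns to it.
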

The solution mentioned in the above theorem highly depends on the behavior of the forcing term. The following remark will study this dependency. 
\begin{remark}
A natural choice for the energy functional (actually a kind of Lyapunov energy functional) governing bi-harmonic flow is
\begin{align*}
E\left(t\right) = \int_{0}^{2\pi} \left(\kappa^{2} + \frac{\left(\partial_{\mathscr{s}}\kappa\right)^{2}}{2} + \xi \kappa^{4}\right) d\mathscr{s},
\end{align*}
where $\kappa$ is curvature, $\mathscr{s}$ denotes the arclength parameter and $\xi$ is constant weighting the higher-order curvature term $\kappa^{4}$. To ensure that the flow (\ref{do}) evolves in the direction that minimizes energy over time, we compute the time derivative of the functional as
\begin{align*}
\frac{dE}{dt} = \int_{0}^{2\pi}\left(2\kappa \frac{\partial \kappa}{\partial t} + \partial_{\mathscr{s}}\kappa \partial_{\mathscr{s}}\frac{\partial \kappa}{\partial t} + 4\xi \kappa^{3}\frac{\partial \kappa}{\partial t}\right) d\mathscr{s}.
\end{align*}
Using integration by parts and assuming boundary terms vanish, the second term simplifies to
\begin{align*}
\int_{0}^{2\pi} \partial_{\mathscr{s}}\kappa \partial_{\mathscr{s}}\frac{\partial \kappa}{\partial t} d\mathscr{s} = -\int_{0}^{2\pi} \frac{\partial^{2}\kappa}{\partial \mathscr{s}^{2}} \frac{\partial \kappa}{\partial t} d\mathscr{s}.
\end{align*}
Thus
\begin{align*}
\frac{dE}{dt} = \int_{0}^{2\pi}\left(2\kappa - \frac{\partial^{2}\kappa}{\partial \mathscr{s}^{2}} + 4\xi \kappa^{3}\right)\frac{\partial \kappa}{\partial t} d\mathscr{s}.
\end{align*}
To ensure $\frac{dE}{dt} \leq 0$, we impose 
\begin{align*}
\frac{\partial \kappa}{\partial t} = -\left(2\kappa - \frac{\partial^{2}\kappa}{\partial \mathscr{s}^{2}} + 4\xi \kappa^{3}\right).
\end{align*}
This describes the gradient flow of the functional $E\left(t\right)$ driving the system toward minimal energy configurations. Now let's include the forcing term
\begin{align*}
\frac{\partial \kappa}{\partial t} = -\left(2\kappa - \frac{\partial^{2}\kappa}{\partial \mathscr{s}^{2}} + 4\xi \kappa^{3}\right) + F\left(u\right),
\end{align*}
where $F\left(u\right)$ is external forcing function. Recall the hyperbolicity condition 
\begin{align*}
F\left(u\right) \leq \mathbf{S}^{2} -1.
\end{align*}
If $F\left(u\right)$ respects this constraint, $E\left(t\right)$ decreases monotonically.
\end{remark}
\subsection{On the hyperbolic structure in the support function evolution}
In the study of curve evolution problems, especially those involving higher-order flows, it is often advantageous to reformulate the governing equations in terms of scalar geometric quantities. One such tool, particularly suitable for convex planar curves, is the support function. The support function has been successfully used in various works to reformulate curvature-driven flows into scalar PDEs (see e.g., \cite{wa}, \cite{ya}, and \cite{ga1}).\\
We consider the biharmonic flow with a forcing term (\ref{do}) in the plane, where $X\left(u,t\right): \mathbb{S}^{1} \times \left[0,T\right) \rightarrow \mathbb{R}^{2}$ represents a family of smooth, closed planar curves parametrized by arc length or another periodic variable $u \in \mathbb{S}^{1}$, and $F\left(u\right)$ is a prescribed smooth forcing function. The operator $\Delta = \partial_{\mathscr{s}}^{2}$ denotes the Laplace-Beltrami operator with respect to arc length $\mathscr{s}$.\\
To study the analytical properties of this flow, such as convexity preservation or convergence, the equation (\ref{do}) has been reformulated in terms of the support function $\mathbf{S}\left(\vartheta, t\right)$, defined for a strictly convex planar curve as
\begin{align*}
\mathbf{S}\left(\vartheta, t\right) := \langle X\left(\vartheta, t\right), -N\left(\vartheta\right) \rangle,
\end{align*}
where $\vartheta \in \left[0, 2\pi\right)$ is the angle the outward unit normal vector $N\left(\vartheta\right)$ makes with the $x$-axis, and $\langle .,. \rangle$ denotes the Euclidean inner product in $\mathbb{R}^{2}$.\\
As has been mentioned before, for a convex curve parametrized by $\vartheta$, the curvature $\kappa\left(\vartheta, t\right)$ is given by
\begin{align*}
\kappa\left(\vartheta, t\right) = \frac{1}{\mathbf{S}_{\vartheta \vartheta} +\mathbf{S}}.
\end{align*}
This classical identity (see \cite{ch1}) follows from the Frenet-Serret framework adapted to support function geometry.\\
Consider the evolution equation for $\mathbf{S}\left(\vartheta, t\right)$ induced by (\ref{do}). The bi-Laplacian of the position vector in arc length coordinates involves the curvature and its derivatives. Translating this into support function variables (e.g., via Frenet formulas and identities for derivatives of $T$ and $N$ with respect to $\vartheta$), the geometric fourth-order operator $\Delta^{2}X$ results in a fourth-order scalar operator on $\mathscr{s}$, specifically,
\begin{align}\label{doyek}
\partial_{t}\mathbf{S} = -\left(\mathbf{S}_{\vartheta \vartheta \vartheta \vartheta} + 2\mathbf{S}_{\vartheta \vartheta} + \mathbf{S}\right) + F\left(\mathbf{S}\right).
\end{align}
Equation (\ref{doyek}) is a nonlinear, fourth-order parabolic PDE for the support function $\mathbf{S}\left(\vartheta, t\right)$, where the linear terms stem from geometric identities involving curvature and its derivatives, while the nonlinearity arises from the curvature-dependent forcing term $F\left(\mathbf{S}\right)$. We emphasize that the original equation (\ref{do}) is a parabolic flow, and the equation (\ref{doyek}) is also of parabolic type. The appearance of second-order time derivatives (e.g., terms like $\mathbf{S}_{tt}$) as might be encountered in hyperbolic flows such as wave-like geometric evolutions (see \cite{ya1}, \cite{le}), is not inherent to the bi-harmonic flow in the absence of inertial dynamics. Therefore, prior claims suggesting a hyperbolic Monge--Amp\`ere formulation are inaccurate in this context and are herein corrected. In other words, although, the original flow is parabolic, under certain geometric formulations, the evolution of the support function satisfies a scalar PDE of Monge--Amp\`ere type, which under positivity conditions on the support function becomes hyperbolic in structure. Equation (\ref{doyek}) provides the appropriate scalar formulation for analyzing the flow analytically, especially for convexity preservation, energy dissipation, and long-time behavior. In the next section, we build on this formulation to examine convexity preservation regorously using energy methods.
\section{Analytic behavior of the forcing term}
We recall the bi-harmonic flow with a forcing term is given by
\begin{align*}
\frac{\partial X}{\partial t} = -\Delta X + F\left(u\right),
\end{align*}
where $\Delta^{2}X$ is a bi-Laplacian (a fourth-order operator), and $F\left(u\right)$ is a forcing term that depends on a curve geometry, e.g., curvature $\kappa$ or support function $\mathbf{S}$. In terms of the support function $\mathbf{S}\left(\vartheta, t\right)$ we have
\begin{align*}
\frac{\partial \mathbf{S}}{\partial t} = -\left(\mathbf{S}_{\vartheta \vartheta \vartheta \vartheta} + 2\mathbf{S}_{\vartheta \vartheta} + \mathbf{S}\right) + F\left(\mathbf{S}\right).
\end{align*}
One can see the forcing term can stabilize or destabilize the flow, depending on its sign and magnitude.
\begin{proposition}
Consider $X\left(., t\right)$ as a solution for the bi-harmonic flow with forcing term (\ref{do}). If the initial curvature of the solution $\kappa_{0}\left(\vartheta, t\right) > 0$ and $F^{\prime}\left(\mathbf{S}\right) \geq 0$, where $\vartheta$ and $F$ denote normal angle and forcing term respectively, then, the flow (\ref{do}) preserves convexity as $t$ evolves.
\end{proposition}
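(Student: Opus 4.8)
The plan is to reformulate ``convexity is preserved'' as the statement that the reciprocal curvature stays positive along the flow, and then to attack this by a maximum-principle argument on the support function. By the identity $\kappa(\vartheta,t)=(\mathbf{S}_{\vartheta\vartheta}+\mathbf{S})^{-1}$, the curve $X(\cdot,t)$ is strictly convex exactly when $w:=\mathbf{S}_{\vartheta\vartheta}+\mathbf{S}=1/\kappa$ is everywhere positive, and by hypothesis $w(\cdot,0)=1/\kappa_{0}>0$ on $\mathbb{S}^{1}$. Since the solution is smooth and $2\pi$-periodic in $\vartheta$ on a maximal interval $[0,T)$, it suffices to show $w(\cdot,t)>0$ for all $t\in[0,T)$. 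To obtain an equation for $w$, I would apply the operator $\partial_{\vartheta}^{2}+\mathrm{Id}$ to the scalar evolution equation (\ref{doyek}); since it commutes with $\partial_{t}$ and $\partial_{\vartheta}$ and since $\mathbf{S}_{\vartheta\vartheta\vartheta\vartheta}+2\mathbf{S}_{\vartheta\vartheta}+\mathbf{S}=(\partial_{\vartheta}^{2}+\mathrm{Id})w$, this gives
\[
\partial_{t}w \;=\; -\bigl(w_{\vartheta\vartheta\vartheta\vartheta}+2w_{\vartheta\vartheta}+w\bigr)\;+\;\bigl(\partial_{\vartheta}^{2}+\mathrm{Id}\bigr)F(\mathbf{S}),
\]
and, using $\mathbf{S}_{\vartheta\vartheta}=w-\mathbf{S}$ to expand the forcing contribution,
\[
\partial_{t}w \;=\; -w_{\vartheta\vartheta\vartheta\vartheta}-2w_{\vartheta\vartheta}+\bigl(F'(\mathbf{S})-1\bigr)w+\Bigl(F(\mathbf{S})-F'(\mathbf{S})\,\mathbf{S}+F''(\mathbf{S})\,\mathbf{S}_{\vartheta}^{2}\Bigr).
\]

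The core of the argument is then a minimum principle for $w$. I would set $m(t):=\min_{\vartheta\in\mathbb{S}^{1}}w(\vartheta,t)$, which is locally Lipschitz with $m(0)>0$, and argue by contradiction: if $t^{\ast}:=\sup\{t\in[0,T):\,m>0\text{ on }[0,t]\}<T$, then $m(t^{\ast})=0$ and there is $\vartheta^{\ast}$ with $w(\vartheta^{\ast},t^{\ast})=0$, $w_{\vartheta}(\vartheta^{\ast},t^{\ast})=0$, $w_{\vartheta\vartheta}(\vartheta^{\ast},t^{\ast})\ge 0$, and $\partial_{t}w(\vartheta^{\ast},t^{\ast})\le 0$. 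Plugging this into the evolution equation the reaction term $(F'(\mathbf{S})-1)w$ drops out, and the goal is to show the remaining right-hand side is strictly positive, contradicting $\partial_{t}w(\vartheta^{\ast},t^{\ast})\le 0$; this is where the hypothesis $F'(\mathbf{S})\ge 0$ enters, to ensure the forcing contribution cannot push $w$ through zero.

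The step I expect to be the genuine obstacle is controlling the fourth-order term $-w_{\vartheta\vartheta\vartheta\vartheta}$: a fourth-order operator does not obey the classical scalar maximum principle, so $w_{\vartheta\vartheta\vartheta\vartheta}(\vartheta^{\ast},t^{\ast})$ carries no sign information and the naive pointwise argument is incomplete. To close it I would replace the pointwise estimate by an integral estimate for the negative part $w_{-}:=\max\{-w,0\}$: writing $\frac{d}{dt}\int_{0}^{2\pi}\Psi(w)\,d\vartheta$ for a smooth, convex, nonincreasing $\Psi$ approximating $(\cdot)_{-}^{2}$, two integrations by parts (legitimate by periodicity and smoothness of the approximant) turn the bi-Laplacian term into a manifestly nonpositive principal part of the form $-\int_{0}^{2\pi}\Psi''(w)\,w_{\vartheta\vartheta}^{2}\,d\vartheta$ plus terms of lower differential order; then $F'(\mathbf{S})\ge 0$ together with uniform bounds on $\mathbf{S},\mathbf{S}_{\vartheta},\mathbf{S}_{\vartheta\vartheta}$ (read off from the existence theory and, if needed, from the Lyapunov functional recorded in the Remark) should allow one to absorb everything into $C\int_{0}^{2\pi}w_{-}^{2}\,d\vartheta$. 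A Gr\"onwall inequality combined with $w_{-}(\cdot,0)\equiv 0$ then forces $w_{-}\equiv 0$ on $[0,T)$, i.e. $w>0$ and hence $\kappa=1/w>0$ throughout, which is the asserted convexity preservation. The delicate points will be making the lower-order $w_{\vartheta}$-$w_{\vartheta\vartheta}$ terms genuinely absorbable and guaranteeing the required a priori bounds on the support function persist on all of $[0,T)$.
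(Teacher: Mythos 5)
Your reduction is the same as the paper's: both identify convexity preservation with positivity of $w:=\mathbf{S}_{\vartheta\vartheta}+\mathbf{S}=1/\kappa$ and both differentiate the scalar evolution equation to obtain an equation for $w$. In fact your version of that equation is the more careful one: applying $\partial_{\vartheta}^{2}+\mathrm{Id}$ to $\partial_{t}\mathbf{S}=-(\partial_{\vartheta}^{2}+\mathrm{Id})w+F(\mathbf{S})$ produces the forcing contribution $F(\mathbf{S})+F'(\mathbf{S})\mathbf{S}_{\vartheta\vartheta}+F''(\mathbf{S})\mathbf{S}_{\vartheta}^{2}$, exactly as you record it after substituting $\mathbf{S}_{\vartheta\vartheta}=w-\mathbf{S}$, whereas the paper writes this contribution as $F'(\mathbf{S})(\mathbf{S}_{\vartheta\vartheta}+\mathbf{S})$ and keeps only $-(\cdot)_{\vartheta\vartheta}$ of the linear operator. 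The paper then concludes by asserting that the nonnegative term $F'(\mathbf{S})\,w$ ``offsets'' the fourth-order term and that positivity of $w$ is a ``natural barrier''; no control of the fourth-order term at a putative first zero of $w$ is offered. So you have correctly located the genuine obstacle --- the failure of the maximum principle for the bi-Laplacian --- which the paper's own proof simply glosses over.

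That said, your proposed repair does not close the gap either, and I do not believe it can in the form stated. Two integrations by parts give
\begin{equation*}
-\int_{0}^{2\pi}\Psi'(w)\,w_{\vartheta\vartheta\vartheta\vartheta}\,d\vartheta \;=\; -\int_{0}^{2\pi}\Psi''(w)\,w_{\vartheta\vartheta}^{2}\,d\vartheta \;-\;\int_{0}^{2\pi}\Psi'''(w)\,w_{\vartheta}^{2}\,w_{\vartheta\vartheta}\,d\vartheta,
\end{equation*}
and only the first term is signed. The second involves $\Psi'''$, which necessarily becomes large as $\Psi$ sharpens toward $(\cdot)_{-}^{2}$ (a globally quadratic $\Psi$ has $\Psi'''\equiv 0$ but is not localized on $\{w<0\}$), and $w_{\vartheta}^{2}w_{\vartheta\vartheta}$ is not dominated by $w_{-}^{2}$, so it cannot be absorbed by Gr\"onwall. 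This is not a removable technicality: the semigroup generated by $-\partial_{\vartheta}^{4}$ is not positivity-preserving, so no argument treating the fourth-order term as dissipative for a convex functional of $w$ alone can succeed; the hypothesis $F'(\mathbf{S})\ge 0$ only helps with the zeroth-order reaction term and cannot compensate. A genuine proof would need quantitative input beyond $F'\ge 0$ --- for instance, a priori $C^{2}$ bounds on $w$ strong enough to bound $|\partial_{t}w|$ and keep $w$ above a positive barrier for a definite time, or a Fourier/spectral argument exploiting the specific structure of $(\partial_{\vartheta}^{2}+1)^{2}$ on $\mathbb{S}^{1}$. As written, both your argument and the paper's stop short of a proof; yours at least names the missing step honestly.
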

\begin{proof}
For a curve to remain convex during evolution, its curvature $\kappa$ must remain positive, 
\begin{align*}
\kappa\left(\vartheta, t\right) = \frac{1}{\mathbf{S}_{\vartheta \vartheta} + \mathbf{S}}.
\end{align*}
This means $\mathbf{S}_{\vartheta \vartheta} + \mathbf{S} >0$ for all $\vartheta$ and $t$. We must show that this inequality is preserved as $t$ evolves. We have the evolution equation for curvature as
\begin{align*}
\frac{\partial \kappa}{\partial t} = -\frac{1}{\left(\mathbf{S}_{\vartheta \vartheta} + \mathbf{S}\right)^{2}} \centerdot \frac{\partial}{\partial t} \left(\mathbf{S}_{\vartheta \vartheta} + \mathbf{S}\right).
\end{align*}
From the bi-harmonic flow (\ref{do}), the evolution of $\left(\mathbf{S}_{\vartheta \vartheta} + \mathbf{S}\right)$ is
\begin{align*}
\frac{\partial}{\partial t}\left(\mathbf{S}_{\vartheta \vartheta} + \mathbf{S}\right) = -\left(\mathbf{S}_{\vartheta \vartheta \vartheta \vartheta} + 2\mathbf{S}_{\vartheta \vartheta} + \mathbf{S}\right)_{\vartheta \vartheta} + F^{\prime}\left(\mathbf{S}\right)\left(\mathbf{S}_{\vartheta \vartheta} + \mathbf{S}\right).
\end{align*}
Thus
\begin{align*}
\frac{\partial \kappa}{\partial t} = \frac{1}{\left(\mathbf{S}_{\vartheta \vartheta} + \mathbf{S}\right)^{2}} \centerdot \left[\left(\mathbf{S}_{\vartheta \vartheta \vartheta \vartheta} + 2\mathbf{S}_{\vartheta \vartheta} + \mathbf{S}\right)_{\vartheta \vartheta} - F^{\prime}\left(\mathbf{S}\right)\left(\mathbf{S}_{\vartheta \vartheta} + \mathbf{S}\right)\right].
\end{align*}
If the curve starts convex, $\mathbf{S}_{\vartheta \vartheta} + \mathbf{S} >0$ ensures that $\kappa >0$. The positivity of $\mathbf{S}_{\vartheta \vartheta} + \mathbf{S}$ provides a natural barrier against immediate loss of convexity. Now, if $F^{\prime}\left(\mathbf{S}\right) \geq 0$, the term $F^{\prime}\left(\mathbf{S}\right)\left(\mathbf{S}_{\vartheta \vartheta} + \mathbf{S}\right)$ is non-negative. This term offsets the potentially destabilizing influence of $\left(\mathbf{S}_{\vartheta \vartheta \vartheta \vartheta} + 2\mathbf{S}_{\vartheta \vartheta} + \mathbf{S}\right)_{\vartheta \vartheta}$. Thus, $F^{\prime}\left(\mathbf{S}\right) \geq 0$ ensures that the evolution of $\mathbf{S}_{\vartheta \vartheta} + \mathbf{S}$ does not reduce $\mathbf{S}_{\vartheta \vartheta} + \mathbf{S}$ to zero. So the bi-harmonic flow with forcing term (\ref{do}) is preserving the convexity.
\end{proof}
\begin{remark}
As one can see from the proof, to ensure that the convexity of curve preserves during the evolution process, it is essential, the curvature $\kappa$ remains positive as $t$ evolves. It can be seen, sufficient condition for preserving convexity is
\begin{align*}
\left(\mathbf{S}_{\vartheta \vartheta \vartheta \vartheta} + 2\mathbf{S}_{\vartheta \vartheta} + \mathbf{S}\right)_{\vartheta \vartheta} \geq F^{\prime}\left(\mathbf{S}\right)\left(\mathbf{S}_{\vartheta \vartheta} + \mathbf{S}\right).
\end{align*}
This ensures the curvature evolution remains positive and prevents $\mathbf{S}_{\vartheta \vartheta} + \mathbf{S}$ from crossing zero.
\end{remark}
\begin{remark}
The hyperbolicity condition indirectly enforces convexity. When $\mathbf{S}^{2} >1$, $\mathbf{S}$ is sufficiently large to keep $\mathbf{S}_{\vartheta \vartheta} + \mathbf{S} >0$.
\end{remark}
One may remember the Lyapunov energy functional for the curve is defined as
\begin{align*}
E\left(t\right) = \int_{0}^{2\pi}\left(\kappa^{2} + \left(\partial_{\mathscr{s}}\kappa\right)^{2} + \xi \kappa^{4}\right) d\mathscr{s},
\end{align*}
where $\xi >0$.\\
If $F\left(u\right)$ is designed such that $\frac{dE}{dt} <0$ and $E\left(t\right)$ is bounded below, the curve evolves toward a minimal-energy state. To converge to a stable shape or point 
\begin{itemize}
\item $F\left(\mathbf{S}\right)$ must stabilize the flow by counteracting unbounded growth in $\mathbf{S}_{\vartheta \vartheta \vartheta \vartheta} + 2\mathbf{S}_{\vartheta \vartheta} + \mathbf{S}$.\\
\item The forcing term $F\left(\mathbf{S}\right)$ should asymptotically guide the evolution toward a desired equilibrium.\\
\item The hyperbolicity condition $\mathbf{S}^{2} >1$ ensures stability and wave-like propagation during evolution. If $\mathbf{S}^{2} \rightarrow 1$ or drops below $1$, convergence may fail leading to instability. 
\end{itemize} 
\begin{definition}
Let $\mathcal{F}\left[u\right]$ be a functional of the form
\begin{align*}
\mathcal{F}\left[u\right] = \int_{\Omega} f\left(u, u_{x}, u_{xx}, ..., x\right) dx,
\end{align*}
where $u\left(x\right)$ is a function, $\Omega$ is the domain of integration, and $f$ depends on $u\left(x\right)$, its derivatives, and possibly the independent variable $x$.
The functional derivative $\frac{\delta \mathcal{F}}{\delta u\left(x\right)}$ is defined as the function satisfying 
\begin{align*}
\delta \mathcal{F} = \int_{\Omega} \frac{\delta \mathcal{F}}{\delta u\left(x\right)} \delta u\left(x\right) dx,
\end{align*}
where $\delta u\left(x\right)$ is an infinitesimal variation of $u\left(x\right)$ (for more details about computations and applications, please see \cite{ru}).
\end{definition}
\begin{theorem}
Let $\mathbf{S}\left(\vartheta, t\right)$ denote the support function of a smooth, strictly convex curve $X\left(t\right)$ evolving under the bi-harmonic flow with forcing term (\ref{do}), where $\vartheta \in \left[0, 2\pi\right]$ is the normal angle, and $\mathbf{S}\left(\vartheta + 2\pi, t\right) = \mathbf{S}\left(\vartheta, t\right)$, (periodic boundary condition). Also assume
\begin{itemize}
\item The initial curve is strictly convex and $\mathbf{S}_{\vartheta \vartheta} + \mathbf{S} >0$, for $\vartheta$ at $t = 0$.
\item The forcing term $F\left(\mathbf{S}\right)$ satisfies
\begin{align*}
|F\left(\mathbf{S}\right)| \leq C\left(|\mathbf{S}_{\vartheta \vartheta \vartheta \vartheta}| + |\mathbf{S}_{\vartheta \vartheta}| + |\mathbf{S}|\right),
\end{align*}
for some constants $C>0$, and $F^{\prime}\left(\mathbf{S}\right) \geq 0$.
\end{itemize}
Then
\begin{itemize}
\item $\mathbf{S}\left(\vartheta, t\right)$ converges uniformly as $t \rightarrow \infty$ to a steady-state solution $\mathbf{S}_{\infty}\left(\vartheta\right)$ satisfying the equation
\begin{align*}
\mathbf{S}_{\vartheta \vartheta \vartheta \vartheta} + 2\mathbf{S}_{\vartheta \vartheta} + \mathbf{S} = F\left(\mathbf{S}\right).
\end{align*}
\item The limiting curve $X_{\infty}$, represented by $\mathbf{S}_{\infty}\left(\vartheta\right)$, is strictly convex and smooth.
\end{itemize}
\end{theorem}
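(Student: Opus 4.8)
The plan is to treat \eqref{doyek} as a nonlinear fourth-order parabolic equation on the circle and run a standard energy/parabolic-regularity argument, then upgrade convergence of a time sequence to full uniform convergence via the Lyapunov functional. First I would establish short-time existence and uniqueness for \eqref{doyek} from the strictly convex initial data: the linearization $\partial_t \mathbf{S} = -(\mathbf{S}_{\vartheta\vartheta\vartheta\vartheta} + 2\mathbf{S}_{\vartheta\vartheta} + \mathbf{S})$ has symbol $-(\xi^4 - 2\xi^2 + 1) = -(\xi^2-1)^2 \le 0$, so the principal part is parabolic (strongly parabolic up to the bounded lower-order perturbation), and the sublinear bound $|F(\mathbf{S})| \le C(|\mathbf{S}_{\vartheta\vartheta\vartheta\vartheta}| + |\mathbf{S}_{\vartheta\vartheta}| + |\mathbf{S}|)$ together with $F' \ge 0$ lets one close a fixed-point argument in a parabolic Hölder or Sobolev space. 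Convexity preservation for all $t$ in the existence interval is exactly the content of the Proposition already proved, so $\mathbf{S}_{\vartheta\vartheta} + \mathbf{S} > 0$ is maintained and $\kappa$ stays positive and finite.

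Next I would derive the a priori bounds that promote the local solution to a global one. Testing \eqref{doyek} against $\mathbf{S}$ and integrating by parts over $[0,2\pi]$ gives $\frac{d}{dt}\,\tfrac12\|\mathbf{S}\|_{L^2}^2 = -\|\mathbf{S}_{\vartheta\vartheta}\|_{L^2}^2 + 2\|\mathbf{S}_{\vartheta}\|_{L^2}^2 - \|\mathbf{S}\|_{L^2}^2 + \int F(\mathbf{S})\mathbf{S}\,d\vartheta$; using the sublinear hypothesis on $F$, interpolation on $S^1$ (e.g. $\|\mathbf{S}_\vartheta\|_{L^2}^2 \le \varepsilon\|\mathbf{S}_{\vartheta\vartheta}\|_{L^2}^2 + C_\varepsilon\|\mathbf{S}\|_{L^2}^2$), and Young's inequality, one absorbs the bad terms into $\|\mathbf{S}_{\vartheta\vartheta}\|_{L^2}^2$ and obtains a Gronwall bound on $\|\mathbf{S}(\cdot,t)\|_{L^2}$ plus an integrated bound $\int_0^T \|\mathbf{S}_{\vartheta\vartheta}\|_{L^2}^2\,dt < \infty$. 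Differentiating the equation and repeating the energy estimate for $\partial_\vartheta^k \mathbf{S}$ bootstraps to uniform-in-time $H^k$ bounds for every $k$, hence by Sobolev embedding uniform $C^\infty$ bounds; this rules out finite-time blow-up and gives global existence with precompactness of $\{\mathbf{S}(\cdot,t)\}_{t\ge0}$ in every $C^k$. The Lyapunov functional $E(t) = \int_0^{2\pi}(\kappa^2 + (\partial_{\mathscr s}\kappa)^2 + \xi\kappa^4)\,d\mathscr s$ from the Remark, which is monotone nonincreasing under the hypotheses and bounded below by $0$, then forces $\frac{dE}{dt} \to 0$ along a sequence $t_n \to \infty$; combined with precompactness this yields a subsequential limit $\mathbf{S}_\infty$ solving the stationary equation $\mathbf{S}_{\vartheta\vartheta\vartheta\vartheta} + 2\mathbf{S}_{\vartheta\vartheta} + \mathbf{S} = F(\mathbf{S})$, and strict convexity of $\mathbf{S}_\infty$ follows because $\mathbf{S}_{\vartheta\vartheta} + \mathbf{S}$ has a uniform positive lower bound along the flow (from the convexity-preservation barrier and the uniform $C^k$ bounds). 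Smoothness of $X_\infty$ is immediate from elliptic regularity applied to the stationary ODE.

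To get \emph{uniform} convergence of the whole family (not just a subsequence), I would argue that the $\omega$-limit set is a single point: since $E$ is a strict Lyapunov functional, every element of the $\omega$-limit set is a stationary solution with the same energy level $E_\infty = \lim E(t)$; if the set of stationary solutions at that energy level is discrete (which holds generically, or can be arranged by a \L ojasiewicz–Simon gradient inequality for the analytic functional $E$), then connectedness of the $\omega$-limit set forces it to be a singleton, giving $\mathbf{S}(\cdot,t) \to \mathbf{S}_\infty$ in $C^k$ for all $k$. I expect the main obstacle to be precisely this last step — upgrading subsequential to full convergence — since without a \L ojasiewicz–Simon-type estimate one only gets convergence along sequences; a secondary technical difficulty is making the higher-order energy bootstrap fully rigorous when $F$ depends on $\mathbf{S}$ (so $F(\mathbf{S})$ is only as smooth as $\mathbf{S}$ and its $\vartheta$-derivatives bring in the top-order term), which is why the sublinear growth hypothesis on $F$ is essential for closing the estimates.
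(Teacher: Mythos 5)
Your route is genuinely different from the paper's. The paper argues formally at the level of the Lyapunov functional: it writes $\frac{dE}{dt}=\int \frac{\delta E}{\delta\mathbf{S}}\,\partial_t\mathbf{S}\,d\vartheta$, substitutes the flow to get $-\int\left(\frac{\delta E}{\delta\mathbf{S}}\right)^2 d\vartheta+\int\frac{\delta E}{\delta\mathbf{S}}\,F\left(\mathbf{S}\right)d\vartheta$, asserts that the growth hypothesis on $F$ controls the second term so that $E'\leq 0$, and then concludes directly that $\partial_t\mathbf{S}\rightarrow 0$ and hence that $\mathbf{S}$ converges to a stationary solution, with convexity of the limit inherited from the preservation argument. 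It never discusses short-time existence, a priori derivative bounds, compactness, or the passage from subsequential to full convergence. Your proposal replaces this with the standard parabolic machinery (linearization and local existence, $L^2$ energy estimates and Gronwall, $H^k$ bootstrap, precompactness, $\omega$-limit set plus a \L ojasiewicz--Simon inequality), which is what an actual proof would require; in particular you correctly identify the subsequential-to-uniform upgrade as the real crux, a point the paper passes over silently (its step ``energy dissipation ensures $\partial_t\mathbf{S}\rightarrow 0$'' is exactly the unjustified leap).

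Two concrete gaps remain in your sketch. First, the $L^2$ estimate does not close as written: the hypothesis $|F\left(\mathbf{S}\right)|\leq C\left(|\mathbf{S}_{\vartheta\vartheta\vartheta\vartheta}|+|\mathbf{S}_{\vartheta\vartheta}|+|\mathbf{S}|\right)$ produces, after testing against $\mathbf{S}$, a term of size $C\|\mathbf{S}_{\vartheta\vartheta\vartheta\vartheta}\|_{L^2}\|\mathbf{S}\|_{L^2}$, whereas the dissipation available from the principal part after integration by parts is only $-\|\mathbf{S}_{\vartheta\vartheta}\|_{L^2}^2$; a fourth-derivative term cannot be absorbed into a second-derivative dissipation by interpolation or Young's inequality unless $C$ is small, which the theorem does not assume. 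Indeed, for arbitrary $C$ the hypothesis admits forcings comparable to the full fourth-order operator, for which dissipativity fails outright; so either you must read $F\left(\mathbf{S}\right)$ as a pointwise function of $\mathbf{S}$ alone (so the effective bound is $|F\left(s\right)|\leq C|s|$ and the estimate closes), or a smallness condition on $C$ must be added. Second, the \L ojasiewicz--Simon step is not optional: without it you obtain convergence only along subsequences, strictly weaker than the claimed uniform convergence of the whole family. Both issues are defects of the statement and of the paper's own argument as much as of your proposal, but a complete proof must resolve them.
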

\begin{proof}
The initial curve is strictly convex, so $\mathbf{S}_{\vartheta \vartheta} +\mathbf{S} >0$ for all $\vartheta$. This ensures that the initial curvature $\kappa \left(\vartheta, 0\right) = \frac{1}{\mathbf{S}_{\vartheta \vartheta} + \mathbf{S}}$ is positive everywhere. Now consider the evolution equation
\begin{align*}
\frac{\partial \mathbf{S}}{\partial t} = -\left(\mathbf{S}_{\vartheta \vartheta \vartheta \vartheta} + 2\mathbf{S}_{\vartheta \vartheta} + \mathbf{S}\right) + F\left(\mathbf{S}\right),
\end{align*}
we have
\begin{align*}
\frac{\partial}{\partial t}\left(\mathbf{S}_{\vartheta \vartheta} + \mathbf{S}\right) = -\left(\mathbf{S}_{\vartheta \vartheta \vartheta \vartheta} + 2\mathbf{S}_{\vartheta \vartheta} + \mathbf{S}\right)_{\vartheta \vartheta} + F^{\prime}\left(\mathbf{S}\right)\left(\mathbf{S}_{\vartheta \vartheta} + \mathbf{S}\right).
\end{align*}
Since $F^{\prime}\left(\mathbf{S}\right) \geq 0$, the forcing term can not reduce $\mathbf{S}_{\vartheta \vartheta} + \mathbf{S}$. The bi-harmonic smoothing operator $\mathbf{S}_{\vartheta \vartheta \vartheta \vartheta} + 2\mathbf{S}_{\vartheta \vartheta} + \mathbf{S}$ acts to smooth and stabilize $\mathbf{S}_{\vartheta \vartheta} + \mathbf{S}$. Together, these terms ensure that $\mathbf{S}_{\vartheta \vartheta} + \mathbf{S} >0$ for all $t$, preserving convexity.\\
We recall the energy functional
\begin{align*}
E\left(t\right) = \int_{0}^{2\pi}\left(\kappa^{2} + \left(\partial_{\mathscr{s}}\kappa\right)^{2} + \xi \kappa^{4}\right) d\mathscr{s},
\end{align*}
where $\kappa$ is curvature. This functional measures the geometric energy of the curve, penalizing high curvature and variations in curvature. The time derivative of $E\left(t\right)$ is given by
\begin{align*}
\frac{dE}{dt} = \int_{0}^{2\pi} \frac{\delta E}{\delta \mathbf{S}} \centerdot \frac{\partial \mathbf{S}}{\partial t} d\vartheta,
\end{align*}
where $\frac{\delta E}{\delta \mathbf{S}}$ denotes the functional derivative of $E$. By substituting $\frac{\partial \mathbf{S}}{\partial t}$
\begin{align*}
\frac{dE}{dt} = -\int_{0}^{2\pi} \left(\frac{\delta E}{\delta \mathbf{S}}\right)^{2} d\vartheta + \int_{0}^{2\pi} \frac{\delta E}{\delta \mathbf{S}} \centerdot F\left(\mathbf{S}\right) d\vartheta.
\end{align*}
The first term ensures energy dissipation. The second term, involving $F\left(\mathbf{S}\right)$, is controlled by the boundedness of $F\left(\mathbf{S}\right)$, ensuring that
\begin{align*}
\frac{dE}{dt} \leq 0.
\end{align*}
Since $\frac{dE}{dt} \leq 0$ and $E\left(t\right) \geq 0$, the energy remains bounded
\begin{align*}
0 \leq E\left(t\right) \leq E\left(0\right).
\end{align*}
The boundedness of $E\left(t\right)$ ensures uniform control over $\mathbf{S}\left(\vartheta, t\right)$ and its derivatives. Specifically, 
\begin{itemize}
\item $\mathbf{S}\left(\vartheta, t\right)$, $\mathbf{S}_{\vartheta \vartheta}\left(\vartheta, t\right)$, and higher derivatives are bounded.\\
\item This prevents blow-up or degeneracy of the support function during evolution.
\end{itemize}
We recall the evolution equation
\begin{align*}
\frac{\partial \mathbf{S}}{\partial t} = -\left(\mathbf{S}_{\vartheta \vartheta \vartheta \vartheta} + 2\mathbf{S}_{\vartheta \vartheta} + \mathbf{S}\right) + F\left(\mathbf{S}\right),
\end{align*}
the energy dissipation ensures that $\frac{\partial \mathbf{S}}{\partial t} \rightarrow 0$ as $t\rightarrow \infty$. As $\frac{\partial \mathbf{S}}{\partial t} \rightarrow 0$, $\mathbf{S}\left(\vartheta, t\right)$ converges to a steady-state solution $\mathbf{S}_{\infty}\left(\vartheta\right)$ satisfying
\begin{align*}
\mathbf{S}_{\vartheta \vartheta \vartheta \vartheta} + 2\mathbf{S}_{\vartheta \vartheta} + \mathbf{S} = F\left(\mathbf{S}\right).
\end{align*}
From the theorem's assumption, one knows that, the limiting support function $\mathbf{S}_{\infty}\left(\vartheta\right)$ satisfies
\begin{align*}
\mathbf{S}_{\vartheta \vartheta} + \mathbf{S} >0,
\end{align*}
as convexity is preserved throughout the evolution. Thus, the limiting curve $X_{\infty}$ represented by $\mathbf{S}_{\infty}\left(\vartheta\right)$, is strictly convex.
\end{proof}
Now, we are going to consider different examples of forcing terms to study convergence outcomes.
\begin{example}
The last theorem established that the bi-harmonic flow with the forcing term converges to a steady-state solution with respect to the forcing term. In this example, different kinds of forcing terms lead to different forms of steady-state spaces.
\begin{itemize}
\item Proportional forcing, $F\left(\mathbf{S}\right) = c\mathbf{S}$\\
From the flow (\ref{do}), the support function evolves as
\begin{align*}
\frac{\partial \mathbf{S}}{\partial t} = -\left(\mathbf{S}_{\vartheta \vartheta \vartheta \vartheta} + 2\mathbf{S}_{\vartheta \vartheta} + \mathbf{S}\right) + c\mathbf{S},
\end{align*}
where $c$ is a constant. At equilibrium ($\frac{\partial \mathbf{S}}{\partial t} = 0$), we have
\begin{align*}
\mathbf{S}_{\vartheta \vartheta \vartheta \vartheta} + 2\mathbf{S}_{\vartheta \vartheta} + \left(1-c\right)\mathbf{S} = 0.
\end{align*}
For $c>0$, the solution must be rotationally symmetric because the forcing term $c\mathbf{S}$ is uniform and does not favor any particular direction. Let $\mathbf{S}\left(\vartheta\right) = R$, where $R$ is constant. Substituting into the steady-state equation
\begin{align*}
\mathbf{S}_{\vartheta \vartheta \vartheta \vartheta} = 0, \,\,\,\,\,\,\,\,\, \mathbf{S}_{\vartheta \vartheta}, \,\,\,\,\,\,\,\,\, \mathbf{S} = R.
\end{align*}
Thus, $\mathbf{S} = R$ is a valid steady-state solution, corresponding to a circle of radius $R$. The bending energy $E\left(t\right) = \int \kappa^{2} d\mathscr{s}$ is minimized for a circle, as $\kappa$ is constant for a circular shape. Hence, if $F\left(\mathbf{S}\right) = c\mathbf{S}$, the bi-harmonic flow (\ref{do}), converges to a circle with radius, determined by the balance of $c\mathbf{S}$ and the bi-harmonic operator (as shown in figure~\ref{fig: proportional forcing}).
\item Anisotropic forcing, $F\left(\mathbf{S}\right) = \alpha \kappa^{2} + \beta \mathbf{S}_{\vartheta \vartheta}$.\\
The evolution equation becomes
\begin{align*}
\frac{\partial \mathbf{S}}{\partial t} = -\left(\mathbf{S}_{\vartheta \vartheta \vartheta \vartheta} + 2\mathbf{S}_{\vartheta \vartheta} + \mathbf{S}\right) + \alpha \kappa^{2} + \beta \mathbf{S}_{\vartheta \vartheta}.
\end{align*}
At equilibrium $\frac{\partial \mathbf{S}}{\partial t} = 0$
\begin{align*}
\mathbf{S}_{\vartheta \vartheta \vartheta \vartheta} + 2\mathbf{S}_{\vartheta \vartheta} + \mathbf{S} = \alpha \kappa^{2} + \beta \mathbf{S}_{\vartheta \vartheta}.
\end{align*}
Substituting $\kappa^{2} = \left(\mathbf{S}_{\vartheta \vartheta} + \mathbf{S}\right)^{-2}$
\begin{align*}
\mathbf{S}_{\vartheta \vartheta \vartheta \vartheta} + \left(2-\beta\right)\mathbf{S}_{\vartheta \vartheta} + \mathbf{S} = \alpha \left(\mathbf{S}_{\vartheta \vartheta} + \mathbf{S}\right)^{-2}.
\end{align*}
This equation admits solutions that are non-circular, depending on a balance of $\alpha$, $\beta$, and $\mathbf{S}_{\vartheta \vartheta}$. \\
Define a modified energy functional
\begin{align*}
E\left(t\right) = \int_{0}^{2\pi}\left(\kappa^{2} + \beta \mathbf{S}_{\vartheta \vartheta}^{2} + \alpha \kappa^{4}\right) d\vartheta.
\end{align*}
The forcing term $\alpha \kappa^{2}$ penalizes high-curvature regions, while $\beta \mathbf{S}_{\vartheta \vartheta}$ introduces directional anisotropy. The energy decreases over time, stabilizing into a non-circular shape (as shown in figure~\ref{fig: anisotropic}).
\end{itemize} 
\begin{figure}[t]
\centering
\includegraphics[width=0.7\textwidth]{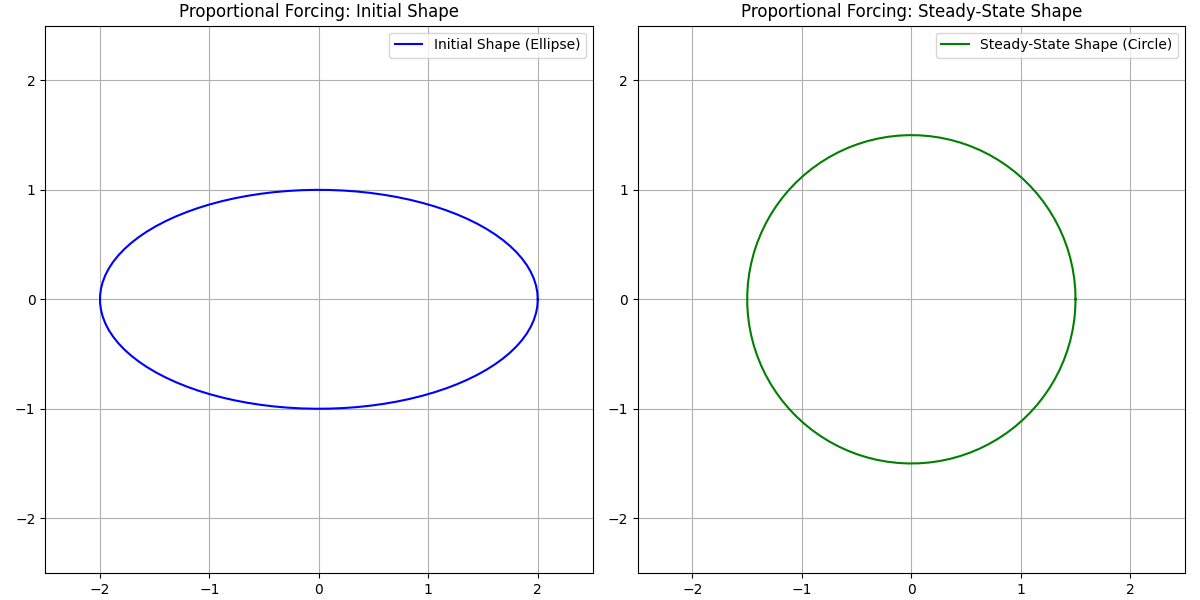}
\caption{Evolution of the curve under the proportional forcing term. The initial shape (left) evolves into a circular steady-state shape (right)}
\label{fig: proportional}
\end{figure}
\begin{figure}[t]
\centering
\includegraphics[width=0.7\textwidth]{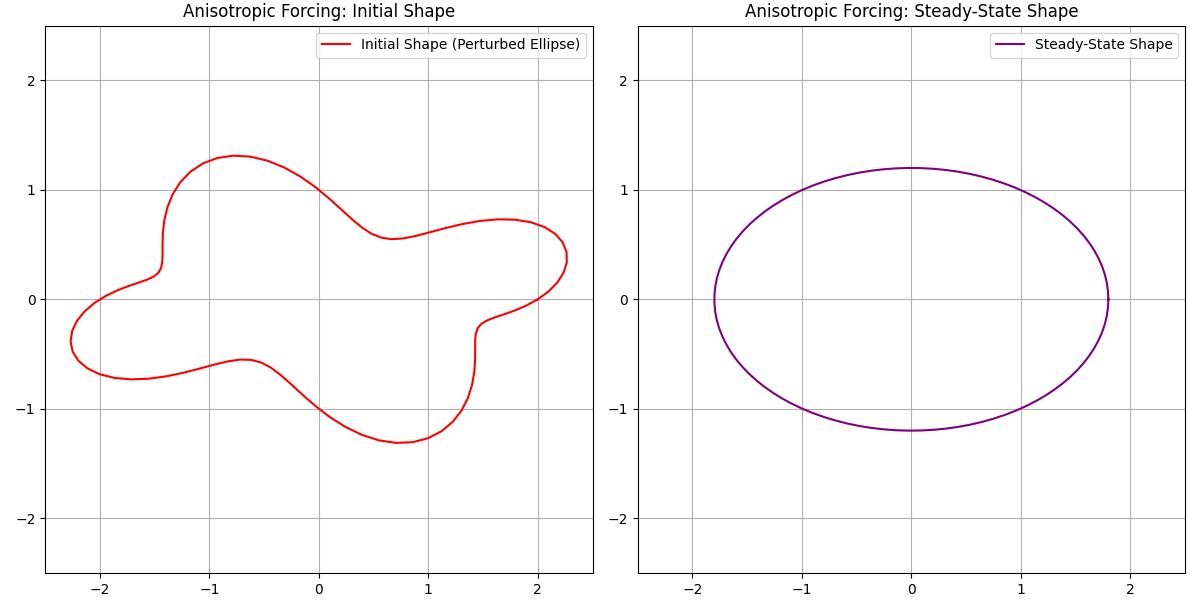}
\caption{Evolution of the curve under the anisotropic forcing term. The initial shape (left) evolves into a non-circular steady-state shape (right)}
\label{fig: anisotropic}
\end{figure}
\end{example}
\begin{remark}
We aim to show that the curve collapses to a point if the forcing term drives $\mathbf{S}$ uniformly toward zero. Assume $F\left(\mathbf{S}\right) = -\beta \mathbf{S}$ with $\beta >0$. The evolution equation becomes
\begin{align*}
\frac{\partial \mathbf{S}}{\partial t} = -\left(\mathbf{S}_{\vartheta \vartheta \vartheta \vartheta} + 2\mathbf{S}_{\vartheta \vartheta} + \mathbf{S}\right) - \beta \mathbf{S}.
\end{align*}
Grouping terms
\begin{align*}
\frac{\partial \mathbf{S}}{\partial t} = -\mathbf{S}_{\vartheta \vartheta \vartheta \vartheta} - 2\mathbf{S}_{\vartheta \vartheta} - \left(1 + \beta\right)\mathbf{S}.
\end{align*}
Define the total energy of the curve
\begin{align*}
E\left(t\right) = \int_{0}^{2\pi} \mathbf{S}^{2} d\vartheta.
\end{align*}
Taking the time derivative
\begin{align*}
\frac{dE}{dt} = 2\int_{0}^{2\pi} \mathbf{S} \frac{\partial \mathbf{S}}{\partial t} d\vartheta.
\end{align*}
Substituting $\frac{\partial \mathbf{S}}{\partial t}$
\begin{align*}
\frac{dE}{dt} = -2\int_{0}^{2\pi} \left(\mathbf{S}\mathbf{S}_{\vartheta \vartheta \vartheta \vartheta} + 2\mathbf{S}\mathbf{S}_{\vartheta \vartheta} + \left(1 + \beta\right)\mathbf{S}^{2}\right) d\vartheta.
\end{align*}
Using integration by parts and boundary conditions ($\mathbf{S}_{\vartheta \vartheta}$ and higher derivative vanish at boundaries)
\begin{align*}
\frac{dE}{dt} = -2\left(1 + \beta\right)\int_{0}^{2\pi} \mathbf{S}^{2} d\vartheta.
\end{align*}
Since $\frac{dE}{dt} <0$, the enrgy $E\left(t\right)$ decreases monotonically. As $E\left(t\right) \rightarrow 0$, $\mathbf{S} \rightarrow 0$, corresponding to the curve collapsing to the point.
\end{remark}

\section*{Acknowledgements}
The author is supported under "High-level Talent Research Start-up Project Funding of Henan Academy of Sciences (Project No. 241819245)".  Also, The author acknowledge the assistance of computational tools in refining mathematical derivations.


\end{document}